\theoremstyle{plain}
\newtheorem{theorem}{Theorem}[section]
\newtheorem{lemma}[theorem]{Lemma}
\newtheorem{main theorem}[theorem]{Main Theorem}
\newtheorem{question}[theorem]{Question}
\theoremstyle{definition}   
\newtheorem{remark}[theorem]{Remark}
\newlength\savewidth
\newcommand{\BS}{\mathrm{BS}}
\newcommand{\svert}{\,|\,}
\begin{document}
\title[A non-Hopfian ascending HNN-extension of a finitely presented Hopfian group]
{A non-Hopfian ascending HNN-extension of a finitely presented Hopfian group}

\author{Jan Kim}
\address{Institute of Mathematical Sciences \\
	Ewha Womans University \\
	52 Ewhayeodae-gil, Seodaemun-gu, Seoul 03760, Republic of Korea}
\email{jankim@ewha.ac.kr}		

\author{Junseok Kim}
\address{Department of Mathematics \\
        Technion - Israel Institute of Technology \\
        Technion City, Haifa, Israel, 3200003}
\email{jsk8818@campus.technion.ac.il}		

\author{Yoonjin Lee}
\address{Department of Mathematics, Ewha Womans University \\
	52 Ewhayeodae-gil, Seodaemun-gu, Seoul 03760, Republic of Korea;
	Korea Institute for Advanced Study, 85 Hoegi-ro, Dongdaemun-gu, Seoul 02455, Republic of Korea}
\email{yoonjinl@ewha.ac.kr}
\subjclass[2020]{Primary 20E06, 20E26, 20F67}
\keywords{ascending HNN-extensions, Hopfian groups, hyperbolic groups, non-relatively hyperbolic groups}
\thanks{Jan Kim was supported by the Basic Science Research Program
through the National Research Foundation of Korea(NRF) funded by the Ministry of Education (2021R1A6A1A10039823), Junseok Kim was supported by Israel Science Foundation Grant 1576/23 (PI Nir Lazarovich), and Yoonjin Lee was supported by the Basic Science Research Program through the National Research Foundation of Korea (NRF) funded by the Ministry of Education (NRF-2019R1A6A1A11051177) and by the National Research Foundation of Korea (NRF) grant funded by the Korea government (MSIT) (NRF-2022R1A2C1003203).}


\begin{abstract}


We find a non-Hopfian ascending HNN-extension of a finitely presented Hopfian group by providing an explicit construction. This result addresses an analogous question to the one posed by Sapir and Wise, which asks whether there is a non-residually finite ascending HNN-extension of a finitely presented residually finite group. Such an analogy is motivated by Mal'cev's result that every finitely generated residually finite group is Hopfian.
\end{abstract}

\maketitle

\section{Introduction}
\label{sec:introduction}		

A group $G$ is called {\em residually finite} if for every $g \in G \setminus \{1\}$, there is a finite group $P$ and an epimorphism $\psi : G \rightarrow P$ so that $\psi(g) \neq 1$. For finitely generated groups, many groups classified by certain properties are residually finite, for example, abelian groups, free groups, nilpotent groups~\cite{Hirsch}, and linear groups~\cite{Malcev}. However, the first finitely generated non-residually finite group was discovered by Neumann~\cite{Newmann} in 1950, and in the following year, Higman~\cite{Higman} constructed the first finitely presented non-residually finite group $\langle a, s, t \svert s^{-1}as=a^2, \ t^{-1}at=a^2 \rangle$, which has the form of a multiple HNN-extension. As a next step, it had been asked whether there is a non-residually finite one-relator group; Baumslag and Solitar~\cite{Baumslag_Solitar} showed that the group $\BS(2,3)=\langle a, b \svert b^{-1}a^2b=a^3 \rangle$ 
is such an example, called the \textit{Baumslag-Solitar group}.

The residual finiteness is preserved under many constructions. For instance, Borisov and Sapir~\cite{Borisov_Sapir} proved that every ascending HNN-extension of a finitely generated linear group is residually finite, and Hsu and Wise~\cite{Hsu_Wise} showed that every ascending HNN-extension of a polycyclic-by-finite group is residually finite.
On the other hand, Sapir and Wise~\cite{Sapir_Wise} demonstrated that ascending HNN-extensions of finitely generated residually finite groups need not be residually finite.
We note that the base group constructed in Sapir-Wise's paper~\cite{Sapir_Wise} is not finitely presented. Indeed, the base group is the \textit{double} of a free group along an infinitely generated subgroup. That is, it is the amalgamated free product of two free groups $F_1$ and $F_2$ of the same rank, obtained by identifying an infinitely generated subgroup of $F_1$ with its copy in $F_2$. The infinite number of presentations comes from the infinitely generated amalgamated subgroup, and in fact, the second homology of the base group is infinitely generated.

Modifying their construction to obtain a finitely presented residually finite base group satisfying Sapir-Wise's recipe seems to be complicated.
Therefore, a natural question arises, as posed by Sapir and Wise~\cite[Problem 3.9]{Sapir_Wise}: 

\bigskip
{\em Is there a non-residually finite ascending HNN extension of a finitely presented residually finite group?}

\bigskip

There is a strong relation between the residual finiteness and the Hopf property.
A group $G$ is called {\em Hopfian} if every epimorphism $G \rightarrow G$ is an automorphism. Mal'cev~\cite{Malcev} proved that every finitely generated residually finite group is Hopfian.
Sapir and Wise~\cite{Sapir_Wise} showed that the ascending HNN-extension of the finitely generated residually finite group that they constructed is non-residually finite by proving that it is non-Hopfian.
More generally, they provided a recipe to construct non-Hopfian ascending HNN-extensions~\cite[Lemma~3.1]{Sapir_Wise}.
Given Mal'cev's result and its use by Sapir and Wise to construct a non-residually finite group by proving it is non-Hopfian, we approach the original question by addressing the following related question.

\bigskip
{\em Is there a non-Hopfian ascending HNN-extension of a finitely presented Hopfian group?}

\bigskip
We answer this question by providing an explicit construction as follows.

\begin{theorem}
\label{thm:main_theorem}
Let $G$ be a group with the following presentation:
\[
\begin{aligned}
	G=\langle a, b, s, t \svert &b^{-1}ab=a, \ s^{-1}a^2s=a^4, \\
	&t^{-1}at=a^2, \ t^{-1}bt=b, \ t^{-1}st=s^{-1}bs^2 \rangle.
\end{aligned}
\]
Then $G$ is a non-Hopfian ascending HNN-extension of a Hopfian group
\[
H=\langle a, b, s \svert b^{-1}ab=a, \ s^{-1}a^2s=a^4 \rangle.
\]
\end{theorem}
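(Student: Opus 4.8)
The plan is to establish the three assertions in turn: that the assignment $\phi\colon H\to H$ given by $\phi(a)=a^2,\ \phi(b)=b,\ \phi(s)=s^{-1}bs^2$ is a well-defined monomorphism (so that $G=\langle H,t\svert t^{-1}ht=\phi(h)\rangle$ is the ascending HNN-extension with associated endomorphism $\phi$), that $H$ is Hopfian, and that $G$ is non-Hopfian. For well-definedness I would check that $\phi$ respects the two relators of $H$: the relator $b^{-1}ab=a$ goes to $b^{-1}a^2b=a^2$, valid since $a,b$ commute; for $s^{-1}a^2s=a^4$ one computes $\phi(s)^{-1}\phi(a)^2\phi(s)=s^{-2}b^{-1}s\,a^4\,s^{-1}bs^2$, and using $sa^4s^{-1}=a^2$ with $[a,b]=1$ this reduces to $s^{-2}a^2s^2=a^8=\phi(a)^4$. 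Thus $\phi$ is an endomorphism and the displayed presentation of $G$ is exactly that of the HNN-extension.

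Next I would record a structural description that organizes the remaining work: comparing presentations gives $\langle a\rangle\cong\ZZ$, $\langle a,b\rangle\cong\ZZ^2$, $\langle a,s\rangle\cong\BS(2,4)$, and
\[
H\;\cong\;\BS(2,4)\ast_{\langle a\rangle}\ZZ^2 ,
\]
the amalgam of $\langle a,s\rangle$ and $\langle a,b\rangle$ over the common cyclic $\langle a\rangle$; equivalently $H$ is the HNN-extension of $\ZZ^2=\langle a,b\rangle$ with stable letter $s$ and associated subgroups $\langle a^2\rangle,\langle a^4\rangle$. In particular $\ZZ^2$ embeds in $H$ and Britton's lemma is available. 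To prove $\phi$ injective I would show that $\phi$ carries $H$ isomorphically onto $\langle a^2,b,s^{-1}bs^2\rangle$ by checking that this subgroup is itself an HNN-extension of $\langle a^2,b\rangle\cong\ZZ^2$ with stable letter $S=s^{-1}bs^2$ and associated subgroups $\langle a^4\rangle\to\langle a^8\rangle$ (note $S^{-1}a^4S=a^8$); since this data is isomorphic to that defining $H$ via $a^2\mapsto a,\ b\mapsto b,\ S\mapsto s$, absence of collapsing yields injectivity. I expect this to be the main obstacle: one must verify that Britton normal forms survive, i.e. that $S$ genuinely acts as a stable letter so that $S$-reduced words over $\langle a^2,b\rangle$ do not pinch, and here the role of the extra letter $b$ in $\phi(s)=s^{-1}bs^2$ is precisely to prevent the collapsing that would occur otherwise.

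For Hopficity of $H$ I cannot invoke Mal'cev, since $H$ contains $\BS(2,4)$ and is therefore not residually finite. Instead the plan is to exploit the amalgam decomposition together with the classical fact that $\BS(2,4)$ is Hopfian (the exponents $2$ and $4$ share the same set of prime divisors). Given an epimorphism $\alpha\colon H\to H$, I would show that $\alpha$ must respect the decomposition $H\cong\BS(2,4)\ast_{\langle a\rangle}\ZZ^2$ up to conjugacy — recognizing the abelian factor $\ZZ^2$, the non-abelian factor $\BS(2,4)$, and the edge group $\langle a\rangle$ through algebraic features preserved by epimorphisms — and then reduce injectivity of $\alpha$ to the Hopficity of the two factors. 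This structural recognition is the other delicate point.

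Finally, for non-Hopficity of $G$ I would exhibit an explicit epimorphism that is not injective, in the spirit of the Sapir-Wise recipe. Define $\Phi\colon G\to G$ by $\Phi(a)=a^2,\ \Phi(b)=b,\ \Phi(s)=s,\ \Phi(t)=t$. Checking the five relators shows $\Phi$ is well-defined: for instance $s^{-1}a^2s=a^4$ maps to $s^{-1}a^4s=a^8$, the relator $t^{-1}at=a^2$ maps to $t^{-1}a^2t=a^4$, and $t^{-1}st=s^{-1}bs^2$ is preserved verbatim. The map is surjective because $a=t\,a^2\,t^{-1}=\Phi(tat^{-1})$ lies in the image (using $t^{-1}at=a^2$), while $b,s,t$ are fixed. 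It is not injective because $w=[\,s^{-1}as,\,a\,]\in\BS(2,4)\le H\le G$ is nontrivial — a Britton-reduced word of positive $s$-length in $\langle a,s\rangle$ — yet $\Phi(w)=[\,s^{-1}a^2s,\,a^2\,]=[a^4,a^2]=1$. Conceptually, $\Phi|_H=\psi$ with $\psi(a)=a^2,\ \psi(b)=b,\ \psi(s)=s$ satisfies $\psi\phi=\phi\psi$ and $\phi(H)\subseteq\psi(H)$, while $\psi$ collapses the Hopfian group $\BS(2,4)=\langle a,s\rangle$ onto its non-isomorphic subgroup $\langle a^2,s\rangle\cong\BS(1,2)$, forcing a nontrivial kernel. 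Hence $\Phi$ is a non-injective epimorphism and $G$ is non-Hopfian.
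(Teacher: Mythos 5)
Your overall architecture coincides with the paper's: the same $\phi$ ($a\mapsto a^2$, $b\mapsto b$, $s\mapsto s^{-1}bs^2$), the same non-injective map ($\Phi|_H=\psi$ with $a\mapsto a^2$, $b\mapsto b$, $s\mapsto s$), and the Sapir--Wise recipe, which you correctly unpack into a direct verification; your well-definedness computation, your surjectivity argument via $a=\Phi(tat^{-1})$, and your witness $w=[s^{-1}as,a]$ (Britton-reduced of $s$-length $4$, killed by $\Phi$) are all correct and play exactly the role of the paper's witness $s^{-1}asa^{-2}$. The genuine gap is the Hopficity of $H$. The paper does not prove this from scratch: it observes that $H=H(2,4,0)$ in the family $H(p,q,k)=\langle a,b,s\svert b^{-1}ab=a,\ s^{-1}a^ps=a^qb^k\rangle$ and cites Andreadakis's Theorem~3. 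Your proposed substitute --- that any epimorphism $\alpha:H\to H$ must ``respect'' the decomposition $\BS(2,4)\ast_{\langle a\rangle}\ZZ^2$ up to conjugacy, reducing to Hopficity of the factors --- rests on a principle that is false in general: epimorphisms, unlike automorphisms, need not preserve a splitting, and Hopficity does not pass to graphs of Hopfian groups (e.g.\ $\BS(2,3)$ is a non-Hopfian HNN-extension of $\ZZ$; indeed, if Hopficity of the pieces sufficed, $H$ itself would be Hopfian for trivial reasons and Andreadakis's hypotheses on $p,q,k$ would be unnecessary). You flag this as ``the other delicate point,'' but nothing in the proposal indicates how the structural recognition would be carried out, and this is exactly where the difficulty sits; as written it is a missing proof rather than a proof. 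It could be closed simply by citing Andreadakis, as the paper does.

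On the injectivity of $\phi$, your route is in substance the paper's: you would realize $\phi(H)=\langle a^2,b,S\rangle$ with $S=s^{-1}bs^2$ and $S^{-1}a^4S=a^8$ as an HNN-extension of $\langle a^2,b\rangle\cong\ZZ^2$ with data isomorphic to that defining $H$, which amounts to checking via Britton's Lemma that images of $s$-reduced words do not pinch --- precisely the computation the paper performs. But you stop at ``absence of collapsing yields injectivity,'' and that absence is the entire content of the step. The paper's verification is a nontrivial case analysis: pinches of the form $sa^{4m}s^{-1}$ \emph{do} occur in $\phi(s^{\varepsilon_p}a^{j_p}s^{\varepsilon_{p+1}})$ for three of the four sign patterns, and one must resolve them (replacing $sa^{2L}s^{-1}$ by $a^L$) and then check, using $a^M\notin\langle a^2\rangle$ and $a^N\notin\langle a^4\rangle$, that no new pinch is created; the case $\phi(s^{-1}a^Ms)$ is handled separately by parity of the exponent. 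Your observation that the letter $b$ in $\phi(s)$ is what blocks wholesale collapsing is the correct intuition, but it is not a substitute for this check. In summary: the non-Hopficity of $G$ and the HNN structure are complete and correct and match the paper; the injectivity of $\phi$ is the right plan left unexecuted; and the Hopficity of $H$ is a genuine gap whose proposed repair would not work as stated.
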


\;
\;

We note that there is a well-known open problem in geometric group theory, posed by Gromov~\cite{Gromov}, which asks whether every hyperbolic group is residually finite. Although this question remains open, recently it has been established that every hyperbolic group is Hopfian~\cite{Fujiwara_Sela, Reinfeldt_Weidmann}. Sapir and Wise~\cite[Conjecture~1.3]{Sapir_Wise} conjectured that every ascending HNN-extension with a hyperbolic base group is also Hopfian. In this regard, Theorem~\ref{thm:main_theorem} provides an indication that one might attempt to construct a non-Hopfian ascending HNN-extension of a hyperbolic group.

On the other hand, the Hopfian group $H$ given in Theorem~\ref{thm:main_theorem} served as a peripheral subgroup of a non-Hopfian relatively hyperbolic group constructed in~\cite[Example~1.5]{Kim_Kyzy_Lee}. The construction in~\cite[Example~1.5]{Kim_Kyzy_Lee} is a non-Hopfian HNN-extension of a Hopfian group but not an ascending HNN-extension, which distinguishes it from our work.

In this paper, we construct a non-Hopfian ascending HNN-extension of a finitely presented Hopfian group by providing an explicit construction in Theorem~\ref{thm:main_theorem}.
In Section~\ref{sec:proof}, we prove Theorem~\ref{thm:main_theorem}. The proof utilizes the lemma of Sapir and Wise~\cite[Lemma~3.1]{Sapir_Wise} regarding the construction of non-Hopfian ascending HNN-extensions. In Section~\ref{sec:non_relatively_hyperbolic}, in connection with Gromov's question, we examine whether our result yields the relative hyperbolicity. In detail, we show that certain non-Hopfian ascending HNN-extensions with specific Hopfian base groups, including the one from Theorem~\ref{thm:main_theorem} and that of Sapir and Wise, are all non-relatively hyperbolic.

\; 

\section*{Acknowledgement} 

The authors are grateful to Professor Daniel Wise for his positive feedback on this work and for encouraging us to write up our results.

\;

\section{Proof of Theorem 1.1}
\label{sec:proof}

The following recipe is to construct non-Hopfian ascending HNN-extensions, as introduced by Sapir and Wise.


\begin{lemma}{\cite[Lemma~3.1]{Sapir_Wise}}
\label{lem:Sapir_Wise}
Let $H$ be a group and let $\phi : H \to H$ be an injective homomorphism. Suppose there exists a homomorphism $\psi : H \to H$ which is not injective, such that $\phi$ and $\psi$ commute, i.e., $\phi \circ \psi = \psi \circ \phi$, and $\phi(H) \subset \psi(H)$. Let $G$ be the ascending HNN-extension of $\phi$, that is,
\[
G = \langle H, t \mid t^{-1}ht = \phi(h), \ \text{for all} \ h \in H \rangle.
\]
Then by extending $\psi : H \to H$ to $\tilde{\psi} : G \to G$ via $\tilde{\psi}(t) = t$, one obtains a surjective but non-injective endomorphism of $G$, proving $G$ is non-Hopfian.
\end{lemma}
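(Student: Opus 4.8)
The plan is to establish three properties of the proposed map $\tilde\psi$: that it is a well-defined endomorphism of $G$, that it is surjective, and that it fails to be injective. Each draws directly on one of the three hypotheses (commutativity, the inclusion $\phi(H) \subseteq \psi(H)$, and non-injectivity of $\psi$), so the overall structure is to dispatch these in turn and then combine them.

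First I would verify that $\tilde\psi$ is well-defined. Since $G$ is generated by $H$ together with $t$, it suffices to check that the assignment $h \mapsto \psi(h)$ for $h \in H$ and $t \mapsto t$ respects the defining relations $t^{-1}ht = \phi(h)$. Applying the assignment to the left-hand side and using the relation in $G$ yields $t^{-1}\psi(h)t = \phi(\psi(h))$, while the right-hand side becomes $\psi(\phi(h))$; these agree precisely because $\phi \circ \psi = \psi \circ \phi$. Hence $\tilde\psi$ extends to a homomorphism $G \to G$.

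Next, for surjectivity, I would observe that the image $\tilde\psi(G)$ is a subgroup containing $t$ and $\psi(H)$. The key move is to recover each $h \in H$ from these: rearranging the defining relation gives $h = t\,\phi(h)\,t^{-1}$, and since $\phi(H) \subseteq \psi(H) \subseteq \tilde\psi(G)$ by hypothesis while $t \in \tilde\psi(G)$, it follows that $h \in \tilde\psi(G)$. Thus $H \subseteq \tilde\psi(G)$, and together with $t$ this generates all of $G$, so $\tilde\psi$ is onto. For non-injectivity, I would choose $h \in H \setminus \{1\}$ with $\psi(h) = 1$, which exists because $\psi$ is not injective; then $\tilde\psi(h) = 1$ exhibits a nontrivial element of $\ker\tilde\psi$, provided $h$ remains nontrivial in $G$.

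The main obstacle is exactly this last point: one must ensure that the element witnessing non-injectivity of $\psi$ genuinely survives in the larger group $G$. Here I would invoke the standard fact that the base group of an (ascending) HNN-extension embeds into the extension (via Britton's lemma or the normal-form theorem), so that $h \neq 1$ in $H$ forces $h \neq 1$ in $G$. The first two steps are direct computations from the hypotheses, but this embedding is the one external ingredient the argument relies on. Combining the three properties, $\tilde\psi$ is a surjective endomorphism of $G$ with nontrivial kernel, and therefore $G$ is non-Hopfian.
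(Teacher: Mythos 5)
Your argument is correct and is essentially the standard proof of this lemma, which the paper itself does not reproduce but cites from Sapir--Wise: well-definedness from $\phi\circ\psi=\psi\circ\phi$, surjectivity from $h=t\,\phi(h)\,t^{-1}$ together with $\phi(H)\subseteq\psi(H)$, and non-injectivity from a nontrivial element of $\ker\psi$ surviving in $G$ via Britton's lemma. You correctly flag the one external ingredient---that injectivity of $\phi$ makes $G$ a genuine (ascending) HNN-extension so that $H$ embeds in $G$---so there is no gap.
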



\bigskip

\begin{remark}
In general, to construct $G$ as a non-Hopfian ascending HNN-extension of a Hopfian group $H$ using Lemma~\ref{lem:Sapir_Wise}, $\phi$ must not be surjective, since otherwise, $\psi$ is also surjective, contradicting the assumption that $H$ is Hopfian. Moreover, the existence of an injective and non-surjective homomorphism $\phi$ implies that $H$ is not co-Hopfian. Furthermore, since any ascending HNN-extension of finitely generated linear group is residually finite (and hence Hopfian)~\cite{Borisov_Sapir}, we must consider $H$ as a non-linear group to construct non-Hopfian ascending HNN-extensions.
\end{remark}




\bigskip

We now prove Theorem~\ref{thm:main_theorem}.

\begin{proof}[Proof of Theorem~\ref{thm:main_theorem}]
Note that 
\begin{equation}
\label{equ:H}
H=\langle a, b, s \svert b^{-1}ab=a, \ s^{-1}a^2s=a^4 \rangle
\end{equation}
can be regarded as an HNN-extension where the base group is $C=\langle a,b\rangle(\cong\mathbb{Z}^2)$, the stable letter is $s$, and the associated subgroups are $\langle a^2\rangle$ and $\langle a^4\rangle$.
The group $H$ is Hopfian by~\cite[Theorem~3]{Andreadakis} with $k=0$, $p=2$ and $q=4$.
We aim to show that $G$ is non-Hopfian by applying Lemma~\ref{lem:Sapir_Wise}.

Let $f$ be a unique homomorphism from a free group with basis $\{a, b, s\}$ to $H$ induced by the mapping:
\[
a \mapsto a^2, \quad b \mapsto b \quad \text{and} \quad s \mapsto s^{-1}bs^2.
\]
Then every defining relator in the presentation~(\ref{equ:H}) is sent to the identity in $H$. Hence, $f$ induces an endomorphism $\phi$ of $H$.

To apply Lemma~\ref{lem:Sapir_Wise}, we first show that $\phi$ is injective.
Let $h\in H$ be a non-identity element. Using a normal form of the HNN-extension, $h$ can be written as a reduced word
\[h\equiv h_0s^{\varepsilon_1}h_1s^{\varepsilon_2}\cdots h_{n-1}s^{\varepsilon_n}h_n,\]
where each $h_i \equiv a^{j_i} b^{k_i} \in C$ with $j_i, k_i \in \mathbb{Z}$ for $i = 0, \dots, n$ and $\varepsilon_i=\pm 1$ for $i = 1, \dots, n$
(Here, the symbol ``$\equiv$’’ denotes {\em letter-by-letter equality}.)
without subword of the form $s^{-1} k s$ or $s k' s^{-1}$, where $k \in \langle a^2 \rangle$ and $k' \in \langle a^4 \rangle$ in the expression of $h$.
To show that $\phi(h)\neq 1$, by Britton's Lemma, it is enough to verify that the expression $\phi(h)$, obtained by applying $\phi$ to each letter of $h \equiv h_0 s^{\varepsilon_1} h_1 s^{\varepsilon_2} \cdots s^{\varepsilon_n} h_n$, can be rewritten in an expression that contains no subword of the form $s^{-1} a^{2m} s$ or $s a^{4m} s^{-1}$ for any $m \in \mathbb{Z} \setminus \{0\}$.

If $h_i \equiv a^{j_i}b^{k_i}$ with $j_i, k_i \in \mathbb{Z}$ and $k_i \neq 0$ for every $i=1, \dots, n-1$, then $\phi(h)$ has no subword of the form $s^{-1}a^{2m}s$ or $sa^{4m}s^{-1}$ for $m \in \mathbb{Z} \setminus \{0\}$.
Otherwise, suppose that there exists $p \in \{1, \dots, n-1\}$ such that $h_p \equiv a^{j_p}$ for some $j_p \in \mathbb{Z} \setminus \{0\}$.
Then the subword $s^{\varepsilon_p}h_p s^{\varepsilon_{p+1}}$ of $h$ is one of the following:
\[
s^{-1}a^L s^{-1}, \quad sa^Ls, \quad s^{-1}a^Ms, \quad \text{and} \quad s a^Ns^{-1},
\]
where $L \in \mathbb{Z} \setminus \{0\}$, $a^M \notin \langle a^2 \rangle$ and $a^N \notin \langle a^4 \rangle$.
Applying $\phi$ to each of these subwords, we obtain 
\[
\begin{aligned}
\phi(s^{-1}a^L s^{-1}) &\equiv s^{-2}b^{-1}sa^{2L}s^{-2}b^{-1}s, &
\phi(sa^Ls) &\equiv s^{-1}bs^2a^{2L}s^{-1}bs^2, \\
\phi(s^{-1}a^Ms) &\equiv s^{-2}b^{-1}sa^{2M}s^{-1}bs^2, &
\phi(sa^Ns^{-1}) &\equiv s^{-1}bs^2a^{2N}s^{-2}b^{-1}s.
\end{aligned}
\]
Then each of $\phi(s^{-1}a^Ls^{-1})$, $\phi(sa^Ls)$, and $\phi(sa^Ns^{-1})$ may contain a subword of the form $sa^{4m}s^{-1}$ for some nonzero integer $m$.
If $\phi(s^{-1}a^L s^{-1})$ or $\phi(sa^Ls)$ contains such a subword, then replacing $sa^{2L}s^{-1}$ by $a^L$ eliminates all subwords of the form $sa^{4m}s^{-1}$ with $m \ne 0$ from $\phi(s^{-1}a^L s^{-1})$ and $\phi(sa^Ls)$. Also, if $\phi(sa^Ns^{-1})$ contains a subword of the form $sa^{4m}s^{-1}$ for some nonzero integer $m$, then, since $a^N \notin \langle a^4 \rangle$, replacing $sa^{2N}s^{-1}$ by $a^N$ ensures that the resulting word contains no subword of the form $sa^{4m}s^{-1}$ with $m \ne 0$.
On the other hand, since $M$ is not divisible by $2$, $\phi(s^{-1}a^Ms) \equiv s^{-2}b^{-1}sa^{2M}s^{-1}bs^2$ cannot contain a subword of the form $sa^{4m}s^{-1}$ for any nonzero integer $m$.
Therefore, we have $\phi(h) \neq 1$, and hence, $\phi$ is injective.

Now, let $g$ be a unique homomorphism from a free group with basis $\{a, b, s\}$ to $H$ induced by the mapping:
\[
a \mapsto a^2, \quad b \mapsto b \quad \text{and} \quad s \mapsto s.
\]
Then all defining relators of the presentation~(\ref{equ:H}) are mapped to the identity element in $H$.
Since $\psi(s^{-1}asa^{-2})=1$ but $s^{-1}asa^{-2} \neq 1$ in $H$ by Britton's Lemma, $\psi$ is not injective. 
Moreover, $\psi$ is not surjective since $a \notin \textrm{im}\psi$.
Furthermore, it is straightforward to check that $\phi \circ \psi=\psi \circ \phi$ and that $\phi(H) \subset \psi(H)$.
By Lemma~\ref{lem:Sapir_Wise}, the mapping torus of $\phi$, that is,
\[
G=\langle H, t \svert t^{-1}at=a^2, \ t^{-1}bt=b, \ t^{-1}st=s^{-1}bs^2 \rangle
\]
is non-Hopfian. Indeed, as aforementioned in Lemma~\ref{lem:Sapir_Wise}, the homomorphism $\tilde{\psi}:G\to G$, obtained by extending $\psi:H\to H$ with $\tilde{\psi}(t)=t$, is surjective but not injective.
\end{proof}

\;

\section{Non-relative hyperbolicity of certain ascending HNN-extensions}
\label{sec:non_relatively_hyperbolic}

There is a longstanding open problem posed by Gromov~\cite{Gromov} asking whether every hyperbolic group is residually finite. Related progress has shown that every hyperbolic group is Hopfian~\cite{Fujiwara_Sela, Reinfeldt_Weidmann}. In~\cite[Conjecture~1.3]{Sapir_Wise}, Sapir and Wise conjectured that for hyperbolic base groups, every ascending HNN-extension is Hopfian. In this context, the existence of a non-Hopfian ascending HNN-extension of a finitely presented Hopfian group, which we construct in Theorem~\ref{thm:main_theorem}, offers an indication that one might attempt to construct a non-Hopfian ascending HNN-extension of a hyperbolic group. Accordingly, one can ask the following question.

\bigskip

\begin{question}
Is it possible to construct a non-Hopfian ascending HNN-extension of a hyperbolic group using Lemma~\ref{lem:Sapir_Wise}?
\end{question}

\bigskip

Note that Gromov's question of whether every hyperbolic group is residually finite is equivalent to Osin's question of whether every relatively hyperbolic group with respect to residually finite peripheral subgroups is residually finite~\cite{Osin, Osin_2}. 
Relatedly, in~\cite[Example~1.5]{Kim_Kyzy_Lee}, the Hopfian group $H$ appearing in our Theorem~\ref{thm:main_theorem} was used as a peripheral subgroup in the construction of a non-Hopfian relatively hyperbolic group.
The construction in~\cite[Example~1.5]{Kim_Kyzy_Lee} is a non-Hopfian HNN-extension of a Hopfian group but not an ascending HNN-extension, which distinguishes it from our work.
This motivates the natural question of whether our construction yields a relatively hyperbolic group .
In fact, a relatively hyperbolic group cannot be obtained using the group $H$ from Theorem~\ref{thm:main_theorem} as a base group of an ascending HNN-extension. Moreover, the ascending HNN-extension of a Hopfian group considered in~\cite[Theorem~3]{Andreadakis} also does not yield a relatively hyperbolic group, as we show below.



\;

\begin{theorem}\label{thm:nrh1}
For integers $p,q,k$ with $p>0$, let
\[
H(p,q,k)=\langle a,b,s \svert b^{-1}ab=a, \ s^{-1}a^ps=a^q b^k\rangle ,
\]
which is the group introduced in~\cite[Theorems 2 and 3]{Andreadakis},
and let
\[
\phi:H(p,q,k) \to H(p,q,k)
\]
be an arbitrary injective homomorphism. 
Then the ascending HNN-extension 
\[
G = \langle H(p,q,k), t \mid t^{-1}ht = \phi(h) , \ \text{for all} \ h \in H(p,q,k) \rangle
\]
is not relatively hyperbolic.
\end{theorem}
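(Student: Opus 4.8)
The plan is to argue by contradiction, using that $a$ has a rank-$2$ abelian centralizer to pin it inside a maximal parabolic subgroup, and then to show that this single parabolic is forced to contain every generator of $G$. So suppose $G$ were hyperbolic relative to some collection $\mathcal{P}$ of proper subgroups. I would begin by recalling the standard structural facts (Osin, Bowditch): since $G\supseteq\langle a,b\rangle\cong\mathbb{Z}^2$ it is non-elementary; every infinite-order element is either loxodromic with virtually cyclic centralizer, or parabolic (conjugate into some $P\in\mathcal{P}$); the maximal parabolic subgroups are exactly the conjugates of the $P\in\mathcal{P}$; and these are almost malnormal, so two maximal parabolics with infinite intersection coincide and each is self-normalizing.

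Next I would locate $a$. The relation $b^{-1}ab=a$ shows that the base $\langle a,b\rangle\cong\mathbb{Z}^2$ embeds in $H(p,q,k)$, hence in $G$, and centralizes $a$; thus $C_G(a)$ contains $\mathbb{Z}^2$ and is not virtually cyclic. Therefore $a$ is not loxodromic, and being of infinite order it must be parabolic. Let $Q$ be the unique maximal parabolic containing $a$. A one-line malnormality argument then gives $C_G(a)\le Q$: for $c\in C_G(a)$ the element $a=cac^{-1}$ lies in $Q\cap cQc^{-1}$ and has infinite order, so $cQc^{-1}=Q$ and $c\in N_G(Q)=Q$. In particular $b\in Q$, so $\langle a,b\rangle\le Q$.

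The core of the proof is then a two-step absorption. First, the defining relation $s^{-1}a^ps=a^qb^k$ puts the infinite-order element $a^qb^k$ (note $(q,k)\neq(0,0)$, which is exactly what makes $H(p,q,k)$ a genuine HNN-extension of $\mathbb{Z}^2$) into $Q\cap s^{-1}Qs$; almost malnormality forces $s^{-1}Qs=Q$, hence $s\in N_G(Q)=Q$, and so $H=\langle a,b,s\rangle\le Q$. Second, the ascending HNN relation gives $t^{-1}at=\phi(a)$, and since $\phi\colon H\to H$ is injective, $\phi(a)$ is an infinite-order element lying in $H\le Q$; thus $\phi(a)\in Q\cap t^{-1}Qt$, which again forces $t^{-1}Qt=Q$ and $t\in Q$. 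Therefore $G=\langle H,t\rangle\le Q$, so the maximal parabolic $Q$ equals $G$, contradicting that peripheral subgroups are proper. (The degenerate case of an empty peripheral collection, i.e.\ genuine hyperbolicity, is ruled out separately because $G$ contains $\mathbb{Z}^2$.)

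The step I expect to need the most care is the absorption of $t$, since $\phi$ is an arbitrary injective homomorphism about which we are given nothing explicit. The observation that makes it go through is that the argument never uses the value of $\phi(a)$: it only uses that $\phi(a)$ belongs to $H$ (hence to $Q$) and has infinite order (because $\phi$ is injective and $a$ does), and this is precisely what is needed to invoke almost malnormality one final time. The remaining points—verifying that $\langle a,b\rangle$ genuinely embeds as $\mathbb{Z}^2$, that $a^qb^k$ has infinite order, and that the quoted centralizer and malnormality statements apply to the finitely generated group $G$—are routine and would just be stated with references.
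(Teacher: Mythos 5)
Your proposal is correct and follows essentially the same route as the paper's proof: rule out $a$ being loxodromic via the $\mathbb{Z}^2$ subgroup $\langle a,b\rangle$ (the paper phrases this through the maximal elementary subgroup $E(a)$, you through the centralizer), then use almost malnormality of maximal parabolics to successively absorb $b$, $s$ (via the relation $s^{-1}a^ps=a^qb^k$), and $t$ (via $t^{-1}at=\phi(a)\in H$, using only injectivity of $\phi$), forcing the parabolic to be all of $G$. The only differences are cosmetic---you conjugate by $s^{-1}$ and $t^{-1}$ where the paper uses $s$ and $t$, and you explicitly flag the nondegeneracy condition $(q,k)\neq(0,0)$ and the empty-peripheral case, which the paper leaves implicit.
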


\bigskip

\begin{proof}
Suppose, for a contradiction, that $G$ is relatively hyperbolic with respect to a finite collection of proper finitely generated subgroups. If $a$ is a hyperbolic element, since $a$ has infinite order, $a$ is contained in a unique maximal elementary subgroup $E(a)=\{g\in G:g^{-1}a^n g=a^{\pm n} \,\,\text{for some $n\in\mathbb{N}$}\}$ (\cite[Theorem 4.3]{Osin_3}). However, since $[a,b]=1$, we obtain $b\in E(a)$, and so $\langle a,b\rangle\cong\mathbb{Z}^2$ is a subgroup of $E(a)$, which is impossible. Hence, $a$ must be a parabolic element.

Suppose $a\in P$ for some parabolic subgroup $P$. By the almost malnormality of parabolic subgroups (\cite[Theorem 1.4]{Osin}), $a\in P\cap b^{-1}Pb$ and the fact that $a$ has infinite order imply that $b\in P$. Similarly, since $a^p\in P\cap sPs^{-1}$, we have $s\in P$. Note that $H(p,q,k)$ is generated by $a,s$, and $t$, so we obtain $H\subset P$. Furthermore, $t$ is contained in $P$ since $a\in P\cap tPt^{-1}$ and $\phi(a)\in H(p,q,k) \subset P$. Therefore, we have $P=G$, and thus $G$ is not relatively hyperbolic.   
\end{proof}

The group $T$ given by the following presentation is the Sapir–Wise's~\cite{Sapir_Wise} non-Hopfian ascending HNN-extension of a finitely generated residually finite group:
\[
\begin{aligned}
T=\langle a, b, c, A, B, C, t \svert &
c^nb^{2^n}a^{2^n}b^{-2^n}c^{-n}=C^nB^{2^n}A^{2^n}B^{-2^n}C^{-n}: n \ge 0, \\
&tat^{-1}=ca^2c^{-1}, \ tbt^{-1}=cb^2c^{-1}, \ tct^{-1}=c, \\
&tAt^{-1}=CA^2C^{-1}, \ tBt^{-1}=CB^2C^{-1}, \ tCt^{-1}=C\rangle.
\end{aligned}
\]
They showed that $T$ is an ascending HNN-extension of the amalgamated product of two free groups with an infinitely generated amalgamated subgroup. Note that $T$ is torsion-free since free groups are torsion-free.

Following the non-relative hyperbolicity of the ascending HNN-extension in Theorem~\ref{thm:nrh1}, we also investigate whether Sapir-Wise's group $T$ is relatively hyperbolic. We prove that the group $T$ is not relatively hyperbolic, using an argument similar to that in the proof of Theorem~\ref{thm:nrh1}.

\bigskip

\begin{theorem}
The group $T$ is not relatively hyperbolic.
\end{theorem}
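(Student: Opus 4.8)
The plan is to mimic the proof of Theorem~\ref{thm:nrh1} by exploiting the abundance of commuting and self-similar relations in $T$. The key structural feature is that $T$ contains an element, namely $c$, which commutes with itself under conjugation by $t$ (the relation $tct^{-1}=c$), and which together with the other generators generates a subgroup whose internal commutation forces any elementary or parabolic subgroup containing a suitable infinite-order element to swallow the whole group. First I would argue that no infinite-order element of $T$ can be hyperbolic: if $g$ is hyperbolic, then $g$ lies in a unique maximal elementary subgroup $E(g)$ by \cite[Theorem~4.3]{Osin_3}, and $E(g)$ is virtually cyclic; but I will exhibit an element commuting with $g$ (or with a power of $g$) that generates, together with $g$, a subgroup containing $\mathbb{Z}^2$, contradicting virtual cyclicity. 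Since $T$ is torsion-free, every nontrivial element has infinite order, so this shows that every nontrivial element is parabolic.

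Next I would fix a parabolic subgroup $P$ and show $P=G$, as in Theorem~\ref{thm:nrh1}, by a sequence of almost-malnormality arguments via \cite[Theorem~1.4]{Osin}. Starting from the fact that some fixed generator (a natural candidate is $c$, or one of $a,b$) lies in $P$, each HNN/amalgamation relation gives an element of infinite order lying in $P\cap gPg^{-1}$ for an appropriate $g$, and almost malnormality forces $g\in P$. The commutation and squaring relations $tat^{-1}=ca^2c^{-1}$, $tbt^{-1}=cb^2c^{-1}$, $tct^{-1}=c$ (and their capitalized analogues) are exactly what is needed to conclude successively that $a,b,c,A,B,C$ and finally $t$ all lie in $P$, at which point $P=T$, contradicting that parabolic subgroups are proper.

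The main obstacle I anticipate is the bookkeeping at the first stage: one must correctly identify which generator to place in $P$ first and verify, for each of the many defining relations, that the relevant intersection $P\cap gPg^{-1}$ genuinely contains an element of infinite order. Unlike $H(p,q,k)$, the group $T$ is built from two free groups amalgamated along an infinitely generated subgroup, so the commutation relations available are less transparent, and I must be careful that the elements I use to trigger almost malnormality are nontrivial of infinite order in $T$ (here torsion-freeness and the normal-form theory of amalgamated products and HNN-extensions do the work). A secondary subtlety is handling the hyperbolic case uniformly: I should check that \emph{some} explicit pair of commuting infinite-order elements generating a $\mathbb{Z}^2$ is available relative to any candidate hyperbolic element, which again follows from the squaring relations together with the amalgam structure. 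Once these verifications are in place, the contradiction $P=T$ closes the argument exactly as in Theorem~\ref{thm:nrh1}.
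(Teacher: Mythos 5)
Your overall strategy---assume a relatively hyperbolic structure, show the generators are parabolic, then propagate membership in a single parabolic subgroup $P$ via almost malnormality until $P=T$---matches the paper's, and your second stage is essentially right, including the point you gloss over: generators that a priori lie in different parabolic subgroups get identified because those subgroups share an infinite-order element (here $t^{-1}c$) and distinct maximal parabolic subgroups intersect in a finite set, which is part of \cite[Theorem 1.4]{Osin}. The genuine gap is in your first stage. You propose to rule out hyperbolicity by exhibiting, for each candidate element, a commuting partner generating a copy of $\mathbb{Z}^2$ inside its maximal elementary subgroup, and you assert that for the generators this ``follows from the squaring relations.'' It does not. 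The relations $tat^{-1}=ca^2c^{-1}$, $tbt^{-1}=cb^2c^{-1}$ (and their capitalized analogues) say that $a,b,A,B$ are each \emph{conjugate} to their own squares --- e.g.\ $(c^{-1}t)\,a\,(c^{-1}t)^{-1}=a^2$ --- and conjugation to a proper power produces no commuting pair: the subgroup $\langle a,\,c^{-1}t\rangle$ is a quotient of $\mathrm{BS}(1,2)$, which contains no $\mathbb{Z}^2$ at all. The correct mechanism, and the one the paper uses, is different for these generators: an infinite-order element conjugate to a proper power of itself cannot be hyperbolic in a relatively hyperbolic group (\cite[Corollary 1.15]{Osin}), so $a,b,A,B$ are parabolic for that reason. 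The $\mathbb{Z}^2$-in-$E(g)$ argument via \cite[Theorem 4.3]{Osin_3} applies only to $t$, $c$, and $C$, using the commutation relations $tct^{-1}=c$ and $tCt^{-1}=C$. If you insist on finding a $\mathbb{Z}^2$ through $a$, the proof stalls.

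A related overreach: you claim that \emph{every} nontrivial element of $T$ is parabolic. This is neither provable by your method (a generic element of the base amalgam has cyclic centralizer and is not visibly conjugate to a proper power) nor needed: the contradiction only requires handling the seven generators together with the auxiliary element $t^{-1}c$. Once you replace the $\mathbb{Z}^2$ mechanism for $a,b,A,B$ by \cite[Corollary 1.15]{Osin} and restrict attention to the generators, your almost-malnormality bookkeeping goes through exactly as in Theorem~\ref{thm:nrh1} and yields $P=T$, contradicting properness of parabolic subgroups.
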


\begin{proof}
    Suppose that $T$ is relatively hyperbolic with respect to a finite collection of proper finitely generated subgroups. The same argument in the proof of Theorem~\ref{thm:nrh1} shows that $t$, $c$, and $C$ are parabolic elements contained in the same parabolic subgroup, say $P$.
    
    Note that $a$ cannot be a hyperbolic element since $a$ is conjugate to $a^2$ (\cite[Corollary 1.15]{Osin}), so $a$ must be contained in some parabolic subgroup $P'$. Since $a\in P'\cap t^{-1}cP'c^{-1}t$, by the almost malnormality of parabolic subgroups, we have $t^{-1}c\in P'$. However, $t^{-1}c$ also belongs to $P$, and the intersection of two different parabolic subgroups must be finite, so we have $P=P'$. Similarly, one can see that $P$ contains the other generators $b$, $A$, and $B$, and hence, we conclude that $P=T$, a contradiction.
\end{proof}

\;

As the previously mentioned ascending HNN-extensions are not relatively hyperbolic, we end this paper with the following question:

\bigskip

\begin{question}
Does every non-Hopfian ascending HNN-extension of a Hopfian group fail to be relatively hyperbolic?
\end{question}

\medskip
\bibliographystyle{abbrv} 
\bibliography{reference}

\end{document}